\DeclareMathOperator{\Aut}{Aut}
\DeclareMathOperator{\GL}{GL}
\DeclareMathOperator{\SL}{SL}
\DeclareMathOperator{\Comm}{Comm}
\DeclareMathOperator{\hol}{hol}
\DeclareMathOperator{\comp}{comp}
\newcommand{\GBS}{\mathcal{GBS}}
\newcommand{\NN}{\mathbf{N}}
\newcommand{\ZZ}{\mathbf{Z}}
\newcommand{\RR}{\mathbf{R}}
\newcommand{\QQ}{\mathbf{Q}}
\newcommand{\Af}{\mathbf{A}}
\newcommand{\Bf}{\mathbf{B}}
\newtheorem{theorem}{Theorem}
\newtheorem{lemma}[theorem]{Lemma}
\theoremstyle{remark}
\newtheorem*{claim*}{Claim}
\newtheorem*{observation*}{Observation}
\newtheorem{remark}[theorem]{Remark}
\newtheorem{definition}[theorem]{Definition}
\title{The Haagerup property is not invariant under quasi-isometry}
\author[Mathieu Carette]{Mathieu Carette \\ \\ With Appendix A by Kevin Whyte; and Appendix B by Sylvain Arnt, Thibault Pillon and Alain Valette}
\address{Rockestate SRL, Rue du Commerce 31, 1000 Brussels, Belgium}
\email{\tt mathieu.carette@gmail.com}
\address{Dept. of Mathematics, Statistics, and Computer Science, University of Illinois at Chicago, USA}
\email{\tt kwhyte@uic.edu}
\address{P\^ole Sup' Sainte Croix - Saint Euverte, 28 rue de l'Etelon, F-45043 Orl\'eans , France}
	\email{\tt arnt.sylvain@gmail.com}
	\address{Haute \'ecole d'Ing\'enierie et de Gestion du Canton de Vaud, Route de Cheseaux 1, CH-1400 Yverdon-les-Bains, Switzerland}
	\email{\tt thibault.pillon@heig-vd.ch}
	\address{Institut de Math\'ematiques, UniMail, 11 Rue Emile Argand, CH-2000 Neuch\^atel, Switzerland}
	\email{\tt alain.valette@unine.ch}
\thanks{M.C.\ was a Postdoctoral Researcher of the F.R.S.-FNRS (Belgium).}
\thanks{T.P. was supported by grant FN 200020-149261/1 of the Swiss SNF}
\subjclass[2010]{20F65, 20E08, 22D05, 22D10}
\keywords{Haagerup property, a-T-menability, weak amenability, quasi-isometry, generalized Baumslag-Solitar groups, equivariant $L^p$ compression} 
\begin{document}
	
	\begin{abstract} Using the work of Cornulier-Valette and Whyte (see Appendix A for the latter), we show that neither the Haagerup property nor weak amenability is invariant under quasi-isometry of finitely generated groups. Appendix B shows, using the same examples, that the same holds for vanishing of the equivariant $L^p$-compression.
	\end{abstract}
	\maketitle
	
	Central both in geometric and in measured group theory, is the study of invariants for finitely generated groups, either under quasi-isometry (QI) or under measure equivalence (ME). Recall the relevant definitions
	\begin{definition}Let $\Gamma,\Lambda$ be finitely generated groups. 
	\begin{itemize}
	\item The groups $\Gamma$ and $\Lambda$ are {\bf quasi-isometric} if there exists a locally compact space $X$ equipped with commuting, free, proper actions of $\Gamma$ and $\Lambda$ by homeomorphisms, such that the orbit spaces $\Gamma\backslash X$ and $X/\Lambda$ are compact.
	\item The groups $\Gamma$ and $\Lambda$ are {\bf measure-equivalent} if there exists a standard measure space $(Y,\mu)$ equipped with commuting, free, measure-preserving actions of $\Gamma$ and $\Lambda$ by Borel automorphisms, having finite measure Borel fundamental domains $Y_\Gamma,Y_\Lambda$ respectively. 
		\end{itemize}\end{definition}

	In spite of the similarity of definitions, in general quasi-isometry does not imply measure equivalence, and vice versa. In fact a comparison of the basic invariants reveals strikingly little intersection (See \cite{BHV08, CCJJV01, CornulierHarpe, Furman11} and references therein for the basics on QI and ME invariants, as well as Section~\ref{sec:defs} for definitions).

	\begin{itemize}
		\item QI invariants: amenability, number of ends, growth, hyperbolicity, finite presentability and Dehn function, asymptotic cones,...
		\item ME invariants: amenability, property (T), ratio of $L^2$-Betti numbers, the Haagerup property (also called a-T-menability), weak amenability and the Cowling-Haagerup constant,... 
	\end{itemize}
	
	With the notable exception of amenability, all QI-invariant properties listed  above are known not to be ME invariant. In the other direction, it is known that neither property (T) nor the ratio of $L^2$-Betti numbers are QI invariants. It is therefore natural to ask whether the Haagerup property and weak amenability, which are both natural generalizations of amenability, are also QI invariants. We show that none of those properties are QI invariant, therefore settling a question raised in \cite[p. 774]{CornulierTesseraValette07}.
	
	\begin{theorem} \label{theorem:main} There exists two finitely generated groups $\Gamma$, $\Lambda$ which are quasi-isometric such that $\Gamma$ has the Haagerup property and is weakly amenable and $\Lambda$ has neither of those properties.
	\end{theorem}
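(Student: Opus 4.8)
The plan is to take $\Lambda = \ZZ^2 \rtimes \SL_2(\ZZ)$, with $\SL_2(\ZZ)$ acting on $\ZZ^2$ by the standard linear representation, and $\Gamma = \ZZ^2 \times \SL_2(\ZZ)$ the corresponding \emph{direct} product; both are finitely presented. The group-theoretic side is easy. The group $\Gamma$ has the Haagerup property because $\ZZ^2$ is amenable and $\SL_2(\ZZ)$ is virtually free, both have the Haagerup property, and the Haagerup property is closed under direct products. Likewise $\Gamma$ is weakly amenable, with Cowling-Haagerup constant $1$: this constant is multiplicative over direct products, equals $1$ for the amenable group $\ZZ^2$, and, by Haagerup's theorem on free groups together with invariance under passage to finite-index overgroups, also equals $1$ for $\SL_2(\ZZ)$.

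Next I would check that $\Lambda$ has neither property. It is not Haagerup because the pair $(\Lambda,\ZZ^2)$ has the relative property (T): $\Lambda$ is a lattice in $\RR^2 \rtimes \SL_2(\RR)$ meeting the normal copy of $\RR^2$ in $\ZZ^2$, and $(\RR^2\rtimes\SL_2(\RR),\,\RR^2)$ has relative (T) since it embeds in $\SL_3(\RR)$ (which has property (T)) as the stabiliser of a vector, so the relative property descends to $(\Lambda,\ZZ^2)$; a proper conditionally negative definite function on $\Lambda$ would then be bounded on the infinite subgroup $\ZZ^2$, which is absurd. For weak amenability, $\Lambda$ does not even have the approximation property of Haagerup-Kraus --- equivalently its Cowling-Haagerup constant is infinite, by the analysis of $\RR^2\rtimes\SL_2(\RR)$ (Haagerup-Kraus, Dorofaeff) and invariance under passage to lattices --- and weak amenability implies the approximation property.

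The heart of the matter is to show $\Gamma \simeq_{\QI} \Lambda$, and here I would invoke Whyte's work. Writing $\SL_2(\ZZ) = \ZZ/4 *_{\ZZ/2} \ZZ/6$ and distributing over the amalgam, both $\Gamma$ and $\Lambda$ are fundamental groups of a graph of groups over a single edge, with the \emph{same} underlying Bass--Serre tree $T$ (the $(2,3)$-biregular tree), acting cocompactly on $T$ with all vertex and edge stabilisers virtually $\ZZ^2$; equivalently, each of $\Gamma$ and $\Lambda$ is an extension of $\SL_2(\ZZ)$ by $\ZZ^2$. Thus both are coarse $\ZZ^2$-bundles over the tree $T$ with the \emph{non-amenable} virtually free base $\SL_2(\ZZ)$, and Whyte's theorem on coarse bundles (their quasi-isometric triviality when the base is a tree with non-amenable structure group) identifies each with $\ZZ^2 \times \SL_2(\ZZ)$ up to quasi-isometry; in particular $\Gamma \simeq_{\QI} \Lambda$, which finishes the proof. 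I expect the only real obstacle to be the careful verification of the hypotheses of Whyte's bundle theorem for this situation --- that the $\ZZ^2$-fibres are uniformly quasi-isometrically embedded and that local trivialisations can be chosen coherently over all of $T$ --- the latter being precisely where non-amenability of $\SL_2(\ZZ)$ is essential (for an amenable base the statement fails, e.g.\ $\ZZ^2\rtimes\ZZ = \mathrm{Sol}$ is not quasi-isometric to $\ZZ^3$).
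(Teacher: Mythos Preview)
Your proposed pair is \emph{not} quasi-isometric, so the argument collapses at the final step. Passing to a finite-index free subgroup $F \leq \SL_2(\ZZ)$, the groups $\ZZ^2 \times F$ and $\ZZ^2 \rtimes F$ lie in $\GBS_2$, with holonomy images $\{I\}$ and $F \subset \SL_2(\ZZ)$ respectively. The first is bounded in $\GL_2(\RR)$ and the second is not, so no conjugate of one lies in a compact neighbourhood of the other: they are not Hausdorff equivalent, and part~(1) of Theorem~\ref{theorem:QIclassif} then rules out a quasi-isometry. There is no version of Whyte's result asserting that every $\ZZ^n$-bundle over a nonamenable free base is coarsely trivial; on the contrary, his theorem identifies the Hausdorff class of the holonomy as a QI invariant, which is precisely what separates your two groups. (Your own closing caveat already contains the seed of the problem: the obstruction for $\mathrm{Sol}$ versus $\ZZ^3$ is not amenability of the base but nontriviality of the holonomy, and that obstruction persists over a free base.)

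The paper instead chooses $\Gamma,\Lambda \in \GBS_2$ whose holonomy images are both \emph{coarsely dense} in $\SL_2(\RR)$ --- hence automatically Hausdorff equivalent, and both nondiscrete so that both land in class~\eqref{case:folded} --- while having different \emph{closures}: one solvable (yielding Haagerup and weak amenability via Theorem~\ref{theorem:CVclassif}), the other all of $\SL_2(\RR)$ (yielding neither). Producing nondiscrete holonomy forces the image to leave $\GL_2(\ZZ)$, which in turn requires at least one \emph{proper} edge inclusion in the graph of groups; any split extension $\ZZ^2 \rtimes F$ with $F \leq \GL_2(\ZZ)$ has discrete holonomy and so can never be matched in this way with the trivial-holonomy group $\ZZ^2 \times F$.
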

	
	Our examples are fundamental groups of graphs of $\ZZ^2$'s. On the one hand the quasi-isometric classification of such groups was obtained by Whyte \cite{Whyte10}\footnote{Since the preprint \cite{Whyte10} did not appear so far, Kevin Whyte was kind enough to write up the relevant part as Appendix A.}. On the other hand Cornulier-Valette \cite{CornulierValette12} characterized those groups with the Haagerup property (which, for this class of groups, turns out to be equivalent to weak amenability). It turns out that both answers depend on a holonomy map discussed in Section~\ref{sec:GBS}. We also point out the key role of locally compact groups in Section~\ref{sec:LC}. In particular allowing for unimodular compactly generated locally compact groups rather than finitely generated groups, we give examples as in Theorem~\ref{theorem:main} where both groups are of the form $\RR^2 \rtimes F$ with $F$ a finitely generated free group (see Remark~\ref{remark:LCexamples}). In Appendix B, Arnt, Pillon and Valette use our finitely generated examples to show that for $1 \leq p \leq 2$ the vanishing of equivariant $L^p$ compression is not a quasi-isometry invariant.
	
	\subsection{Definitions}\label{sec:defs} We briefly recall here definitions and facts regarding the properties of interest.
	
	\begin{definition} Let $G$ be a locally compact group, 2nd countable group $G$.
	\begin{itemize}
	\item The group $G$ is \textbf{amenable} if there exists a sequence $(\varphi_n)_{n>0}$ of normalized positive definite functions\footnote{A continuous function $\varphi$ on $G$ is {\bf normalized positive definite} if there exists a strongly continuous unitary representation $\pi$ on some Hilbert space $\mathcal{H}$, and a unit vector $\xi\in\mathcal{H}$, such that $\varphi(g)=\langle\pi(g)\xi|\xi\rangle$ for every $g\in G$.} on $G$, with compact support, such that $\varphi_n\xrightarrow[n\to \infty]{} 1 $ uniformly on compact sets.
	\item The group $G$ has the \textbf{Haagerup property} (or is \textbf{a-T-menable}) if there exists a sequence $(\varphi_n)_{n>0}$ of normalized positive definite functions on $G$, vanishing at infinity, such that $\varphi_n\xrightarrow[n\to \infty]{} 1 $ uniformly on compact sets.
	
	\item The group $G$ is \textbf{weakly amenable} if there is a constant $\Lambda$ and a sequence $(\varphi_n)_{n>0}$  of continuous functions with compact support on G, such that 
	 $\varphi_n \xrightarrow[n\to \infty]{} 1$ uniformly on compact sets and 
	 \begin{equation} \label{eq:weak_amen}
 \|\varphi_n\|_{\text{cb}} \leq \Lambda \text{ for all } n>0.
\end{equation} 
	where $\|\varphi\|_{\text{cb}}$ denotes the completely bounded norm of a function $\varphi$ viewed as a multiplier of the Fourier algebra $A(G)$. The \textbf{Cowling-Haagerup} constant $\Lambda(G)$ is the infimum over all possible $\Lambda$'s as in~\eqref{eq:weak_amen}, with the convention $\Lambda(G)=\infty$ if no such $\Lambda$ exists.
	\end{itemize}
	\end{definition} 
	
	Clearly amenable groups have the Haagerup property; they are also weakly amenable with Cowling-Haagerup constant 1, as $\|\varphi\|_{\text{cb}}=1$ when $\varphi$ is a normalized positive definite function on $G$. However, both of these properties also encompass many non-amenable groups such as free groups or full isometry groups of locally finite trees. Neither does the Haagerup property imply weak amenability (see \cite{CorStaVal}), nor vice versa. Whether $\Lambda(G)=1$ implies the Haagerup property for $G$ remains an open question.

	
	\subsection{Generalized Baumslag-Solitar groups} \label{sec:GBS} Fix an integer $n \in \NN$. We consider the class $\GBS_n$ of groups $\Gamma$ acting cocompactly on a locally finite tree $T$ such that all (vertex and edge) stabilizers are isomorphic to $\ZZ^n$. Equivalently, a group $\Gamma$ is in $\GBS_n$ if it is the fundamental group of a finite graph of groups where all edge and vertex groups are isomorphic to $\ZZ^n$ \cite{Serre80}. 
	
	Let $\Gamma$ and $T$ be as above. Fix a vertex $v \in T$ and an isomorphism $\Gamma_v \cong \ZZ^n$. As all inclusions of edge groups into vertex groups have finite index, $\Gamma$ commensurates $\Gamma_v$. So the action of $\Gamma$ on $\Gamma_v$ by commensuration induces a homomorphism to the abstract commensurator $\hol : \Gamma \to \Comm(\Gamma_v) = \GL_n(\QQ)$, which we call the \textbf{holonomy map}\footnote{This is also sometimes called the modular homomorphism, especially for the class $\GBS_1$ \cite{Levitt07}.}. Note that $h$ is well-defined up to conjugation in $\GL_n(\QQ)$ (with the different possibilities coming from a different choice of basepoint $w$ and a different identification $\Gamma_w \cong \ZZ^n$). In fact, the commensurability class of $\Gamma_v$ inside $\Gamma$ does not depend on the chosen tree $T$ (as soon as $\Gamma$ is not amenable, equivalently $\Gamma$ stabilizes no vertex, no line and no end of $T$) see e.g.\ the proof of Lemma~8.5 in \cite{GuirardelLevitt07}. From now on, we view the holonomy as a map from $\Gamma$ to $\GL_n(\RR)$.
	
	We say that $\Gamma, \Gamma'\in \GBS_n$ have \textbf{Hausdorff equivalent holonomy} if there is a compact subset $K \subset \GL_n(\RR)$ and some $g \in \GL_n(\RR)$ such that $\hol(\Gamma) \subset g \hol'(\Gamma')g^{-1} K$ and $g \hol'(\Gamma') g^{-1} \subset \hol(\Gamma) K$. In other words $\hol(\Gamma)$ is at finite Hausdorff distance from some conjugate $\hol'(\Gamma')$ for the word metric corresponding to some (equivalently any) compact generating set of $\GL_n(\RR)$.
	
	Whyte showed that the QI classification of $\GBS_n$ groups is essentially governed by the holonomy map.  	
	\begin{theorem}[{\cite[Theorem 0.1]{Whyte10}}] \label{theorem:QIclassif} Among the class of groups in $\GBS_n$ whose Bass-Serre tree $T$ has infinitely many ends the following holds:
		\begin{enumerate} 
			\item If two groups are quasi-isometric then they have Hausdorff equivalent holonomy.	\item (see Theorem \ref{trichotomy} in Appendix A) The set of groups within a given Hausdorff equivalence class of holonomy divides into three quasi-isometry invariant subclasses:
			\begin{enumerate}
				\item \label{case:proper} Those which are of the form $\ZZ^n \rtimes F$ for $F$ a free subgroup of $\GL_n(\ZZ)$.
				\item \label{case:amenable} Those which are virtually ascending HNN-extensions of some endomorphism $E : \ZZ^n \to \ZZ^n$. These are classified up to QI in \cite{FarbMosher00}.
				\item \label{case:folded} All groups not of the first two forms, all of which are in a single quasi-isometry class.
			\end{enumerate}
		\end{enumerate}
	\end{theorem}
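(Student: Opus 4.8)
The plan is to replace each group by a geometric model and show that quasi-isometries respect its bundle structure. For $\Gamma \in \GBS_n$ with Bass--Serre tree $T$, let $X_\Gamma$ be the associated tree of flats (the Bass--Serre complex of the graph of groups): a geodesic metric space with a coarsely Lipschitz projection $\pi_\Gamma \colon X_\Gamma \to T$ whose fibers $F_t = \pi_\Gamma^{-1}(t)$ are non-canonical, and in general \emph{distorted}, copies of $\RR^n$, neighbouring fibers being glued along finite-index --- hence coarsely full --- subgroups; recall that $\Gamma \simeq_{\QI} X_\Gamma$. Since $T$ has infinitely many ends and $\Gamma$ acts on it cocompactly, $T$ is quasi-isometric to a \emph{bushy} tree --- every vertex within bounded distance of one of valence $\ge 3$ --- and we replace $T$ by it. I would first record how the gluing data assemble: each edge of the graph of groups contributes, up to bounded error, an element of $\GL_n(\RR)$ (the change of basis between its two edge-to-vertex inclusions), and the group generated by the transition matrices around the loops of the graph is a conjugate of $\hol(\Gamma)$.

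The technical heart is a coarse bundle rigidity statement: every quasi-isometry $f \colon X_\Gamma \to X_{\Gamma'}$ carries fibers to within uniformly bounded Hausdorff distance of fibers. As the fibers are distorted, they cannot be detected as quasiflats; instead one detects them by coarse separation in the style of Farb--Mosher. For all large $R$ the $R$-neighbourhood of a fiber $F_t$ disconnects $X_\Gamma$ into $\ge 3$ deep components, one for each direction of $T$ at $t$ (here bushiness enters), while proper coarse subsets of a fiber --- and subsets with unbounded image in $T$ --- fail to separate $X_\Gamma$ this way: a codimension-one piece of $F_t$ gets reconnected through the neighbouring fibers, which meet $F_t$ in coarsely full subgroups. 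Characterising the fibers as the minimal coarsely-separating subsets, one concludes that $f$ preserves them up to bounded error, hence induces a quasi-isometry $\bar f \colon T \to T'$ together with a coarsely coherent family of self-quasi-isometries of $\RR^n$ between matched fibers. The case $n = 1$ is more delicate, since quasilines can meander through $T$; there one supplements the argument with a flaring analysis in the spirit of Farb--Mosher (exponential divergence of the fibers) to single out which quasilines are fibers. Finally, matching fibers conjugates the set of transition matrices of $\Gamma$ to that of $\Gamma'$ up to bounded error, i.e.\ it renders $\hol(\Gamma)$ and $\hol(\Gamma')$ Hausdorff equivalent; this is part (1).

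For part (2) one fixes a Hausdorff equivalence class of holonomy and analyses the coarse bundle $\pi_\Gamma \colon X_\Gamma \to T$ up to quasi-isometry, two quasi-isometry invariant features cutting out the special subclasses. If $\Gamma$ is virtually an ascending HNN extension of some $E \colon \ZZ^n \to \ZZ^n$ with $|\det E| \ne 1$, the bundle \emph{flares in only one direction}: there is a distinguished end of $T$ towards which the fibers coarsely expand, which fiber rigidity forces $\bar f$ to preserve, so case~\eqref{case:amenable} is quasi-isometry invariant (its finer classification being that of Farb--Mosher). If $\Gamma = \ZZ^n \rtimes F$, the normal $\ZZ^n$ acts on $X_\Gamma$ coboundedly with orbits coarsely equal to the fibers; the existence of such a cobounded $\RR^n$-action by quasi-isometries whose orbits are the bundle fibers is quasi-isometry invariant and, using the structure theory of $\GBS_n$ groups to upgrade a coarse action to a genuine normal $\ZZ^n$, it characterises case~\eqref{case:proper}. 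In the remaining \emph{folded bushy} case, one proves directly that any two such bundles with Hausdorff-equivalent holonomy are quasi-isometric by \emph{building} a coarse bundle equivalence: bushiness of both trees leaves enough room to match fibers inductively, level by level, rerouting edges as the combinatorics dictates, while the residual discrepancies in the transition maps are absorbed by the monodromy precisely because the two previous cases fail --- the Hausdorff completion of the group generated by the transition matrices is then large enough to do so. I expect the two genuinely hard points to be the fiber-rigidity step, above all the flaring argument when $n = 1$, and this last hands-on construction of a coarse bundle equivalence; together these form the technical core of \cite{Whyte10}.
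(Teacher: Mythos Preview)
This theorem is not proved in the paper: it is quoted from \cite{Whyte10} and used as a black box. The only discussion beyond the bare statement appears in Section~\ref{sec:LC}, where for the specific pair $\Gamma,\Lambda$ the author outlines how Whyte constructs a bilipschitz bijection $\eta\colon F_2\to F_3$ with $\psi(g)\varphi(\eta(g))^{-1}$ lying in a fixed compact set, and then lifts $\eta$ to a fiber-preserving quasi-isometry between the semidirect products $G=\RR^2\rtimes_\varphi F_2$ and $L=\RR^2\rtimes_\psi F_3$. That outline matches your account of case~\eqref{case:folded}: build a bilipschitz map between the base trees compatible with the holonomies up to bounded error, then lift fiberwise.

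Your sketch of part~(1) via coarse separation/fiber rigidity, and of the invariance of the subclasses \eqref{case:proper} and \eqref{case:amenable}, goes well beyond anything in this paper and so there is nothing here to compare it against; whether it faithfully reproduces \cite{Whyte10} is a question about that reference, not this one. One small slip worth fixing: in your treatment of case~\eqref{case:proper} you write that the normal $\ZZ^n$ ``acts on $X_\Gamma$ coboundedly with orbits coarsely equal to the fibers''; if the orbits are the fibers then the action is certainly not cobounded on all of $X_\Gamma$. What you presumably mean, and what is actually relevant (cf.\ Remark~\ref{remark:amenable_proper}), is that the holonomy has \emph{discrete} image in $\GL_n(\RR)$, equivalently that the edge inclusions are all isomorphisms so that the fibers are undistorted and the normal $\ZZ^n$ acts coboundedly on each fiber.
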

	\begin{remark} \label{remark:amenable_proper} Groups of the form \eqref{case:amenable} are exactly those groups which are amenable. Groups of the form \eqref{case:proper} have a holonomy with discrete image in $\GL_n(\RR)$.
	\end{remark}
	
	Cornulier and Valette showed that both the Haagerup property and weak amenability of a $\GBS_n$ group is determined by the image of its holonomy. 
	\begin{theorem}[{\cite[Theorem 1.6]{CornulierValette12}}] \label{theorem:CVclassif} Let $\Gamma \in \GBS_n$, with holonomy $\hol : \Gamma \to \GL_n(\RR)$. Then the following are equivalent:
		\begin{enumerate}
			\item \label{item:Haagerup} $\Gamma$ has the Haagerup property.
			\item $\Gamma$ is weakly amenable.
			\item $\Gamma$ has Cowling-Haagerup constant $1$.
			\item \label{item:hol_amenable} $\overline{\hol(\Gamma)}$ is amenable.
		\end{enumerate}
	\end{theorem}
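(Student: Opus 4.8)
The plan is to pass to a locally compact model and reduce the whole statement to a single dichotomy, namely whether the pair $(G,V)$ constructed below has relative property (T). First I would record the standard reduction. Since $\Gamma$ is finitely generated, only finitely many primes $S$ occur in the matrix entries of $\hol(\Gamma) \subset \GL_n(\QQ)$, and the $\Gamma$-orbit of the vertex group $\ZZ^n$ under commensuration generates a copy of $\QQ_S^n := \ZZ[1/p : p\in S]^n$. Completing at the relevant places, one realizes $\Gamma$ as a cocompact lattice in a locally compact group fitting into an extension
\[ 1 \longrightarrow V \longrightarrow G \longrightarrow Q \longrightarrow 1, \qquad V = \RR^n \times \prod_{p \in S}\QQ_p^n, \]
where $Q$ acts properly and cocompactly on the Bass--Serre tree $T$ (so $Q$ is Haagerup with $\Lambda_{cb}(Q)=1$), and the action of $Q$ on the self-dual group $V \cong \hat V$ is the holonomy, with closed image $H$. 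Because both the Haagerup property and the Cowling--Haagerup constant are shared between a locally compact group and its cocompact lattices, it suffices to decide properties (1)--(3) for $G$. Moreover, by the Tits alternative a finitely generated linear group is amenable iff virtually solvable, so amenability of $H$ is independent of the places chosen and is exactly condition (4).

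The crux is the following dichotomy, which I would establish using Cornulier's characterization of relative property (T) for semidirect products $V \rtimes H$: the pair $(G,V)$ has relative property (T) if and only if $H$ is non-amenable. If $H$ is amenable it fixes a probability measure on the projective space $\mathbf{P}(\hat V)$, which by Cornulier's criterion rules out relative property (T). Conversely, if $H$ is non-amenable I would pass to its Zariski closure, use a filtration of $V$ into irreducible pieces, and invoke that a non-virtually-solvable linear action leaves invariant no probability measure on the projectivization of its non-amenable part (proximality/Furstenberg), yielding relative property (T). Verifying this equivalence---in particular controlling the reducible case via the Zariski-closure filtration---is the technical heart of the argument.

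With the dichotomy in hand, the positive direction (4) $\Rightarrow$ (1) and (4) $\Rightarrow$ (3) is constructive. If $H$ is amenable, then $(G,V)$ lacks relative property (T); combining a proper conditionally negative definite function pulled back from the tree action of $Q$ with one built along the fibre $V$ from the amenable $H$-action, I would exhibit a proper conditionally negative definite function on $G$, giving the Haagerup property. For weak amenability I would use that it is stable under extensions with amenable kernel, whence $\Lambda_{cb}(G) \le \Lambda_{cb}(V)\,\Lambda_{cb}(Q) = 1$, i.e. $G$ has the complete metric approximation property. This yields (4) $\Rightarrow$ (3) $\Rightarrow$ (2) (the last implication being trivial) and (4) $\Rightarrow$ (1).

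It remains to prove the obstruction implications, i.e. the contrapositives (1) $\Rightarrow$ (4) and (2) $\Rightarrow$ (4): if $H$ is non-amenable then $G$ is neither Haagerup nor weakly amenable. The Haagerup obstruction is immediate from the dichotomy, since a group with relative property (T) (with infinite relative part) cannot be Haagerup, and this passes to the lattice $\Gamma$. The failure of weak amenability is the main obstacle: relative property (T) alone does not preclude a finite Cowling--Haagerup constant, so here I would isolate inside $\Gamma$ a copy of $\QQ_S^n \rtimes F$ with $F$ a free non-abelian subgroup of the holonomy (furnished by the Tits alternative) and invoke the rigidity result that such a semidirect product of a vector group by a non-amenable linear group is not weakly amenable; since weak amenability passes to closed subgroups, this forces $\Lambda_{cb}(\Gamma) = \infty$, completing (2) $\Rightarrow$ (4). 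Establishing this last non-weak-amenability input, rather than the Haagerup side, is where I expect the real difficulty to lie.
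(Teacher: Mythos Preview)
The paper does not prove this theorem; it is quoted from \cite{CornulierValette12}, and Section~\ref{sec:LC} only sketches the equivalence \eqref{item:Haagerup} $\Leftrightarrow$ \eqref{item:hol_amenable} for the two specific groups $\Gamma,\Lambda$, via the embedding $\Gamma \hookrightarrow \Aut(T)\times G$ with $G=\RR^2\rtimes F_2$. Your locally compact model $1\to V\to G\to Q\to 1$ with $V=\RR^n\times\prod_{p\in S}\QQ_p^n$ and the relative property~(T) dichotomy is in the same spirit as the actual Cornulier--Valette argument, so at the strategic level you are on the right track.

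There is, however, a genuine error in your derivation of (4) $\Rightarrow$ (3). You invoke the principle that ``weak amenability is stable under extensions with amenable kernel, whence $\Lambda_{cb}(G)\le\Lambda_{cb}(V)\Lambda_{cb}(Q)=1$''. This principle is false, and the counterexample is exactly the situation governed by the theorem you are proving: take $G=\ZZ^2\rtimes \SL_2(\ZZ)$, $V=\ZZ^2$ amenable, $Q=\SL_2(\ZZ)$ virtually free with $\Lambda_{cb}(Q)=1$, yet $G$ is not weakly amenable (this is the content of (2) $\Rightarrow$ (4) applied to a $\GBS_2$ group with discrete non-amenable holonomy). So your argument, as written, is circular. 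The repair is not an extension principle but an \emph{embedding}: when the holonomy closure $H$ is amenable, the homomorphism $Q\to H$ together with the quotient $G\to Q$ gives a closed embedding $G=V\rtimes Q \hookrightarrow (V\rtimes H)\times Q$. The first factor is amenable and the second acts properly on a tree, so both have Cowling--Haagerup constant $1$; since $\Lambda_{cb}$ passes to closed subgroups, $\Lambda_{cb}(G)=1$. This is precisely the mechanism behind the embedding $\Gamma\hookrightarrow \Aut(T)\times G$ in the paper's sketch, and the same embedding simultaneously handles (4) $\Rightarrow$ (1), making your separate ``combine two conditionally negative definite functions'' step unnecessary.

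A smaller inaccuracy: the quotient $Q=\Gamma/A$ (with $A\cong\QQ_S^n$ the normal subgroup generated by vertex groups) does \emph{not} act on the Bass--Serre tree $T$, since $A$ does not act trivially on $T$. What is true, and what you need, is that $Q$ is the fundamental group of the underlying finite graph, hence free; it acts properly cocompactly on its own Cayley tree, which is enough to conclude that $Q$ is Haagerup with $\Lambda_{cb}(Q)=1$. Your remaining steps (the relative property~(T) dichotomy via Cornulier's measure criterion, and the appeal to a non-weak-amenability result for $V\rtimes F$ with $F$ free non-amenable for (2) $\Rightarrow$ (4)) are reasonable outlines of what Cornulier--Valette actually do.
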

	
	\begin{proof}[Proof of Theorem~\ref{theorem:main}] 
	Let $X = \ZZ^2 = \langle a,b\mid [a,b]\rangle$ and consider the subgroup $Y  = \langle a,b^2 \rangle < X$. Consider furthermore the following matrices in $\SL_2(\QQ)$:
	\[ H = \left( \begin{array}[pos]{cc} 2 & 0 \\ 0 & \frac{1}{2} \end{array} \right) ; 
		P = \left( \begin{array}[pos]{cc} 1 & 1 \\ 0 & 1 \end{array} \right) \text{ and }
		E = \left( \begin{array}[pos]{cc} 0 & 1 \\ -1 & 0 \end{array} \right) .\]
	Finally, consider the following graphs of groups
	\[  \Af = {\begin{xy}\xymatrix{ 
			 X \ar@{->}@(r,dr)^{Y}^<<<{id}^>>>{H} \ar@{->}@(dl,l)^{X}^<<<{id}^>>>{P}
			 }\end{xy}} 
		\text{ and } 
		\Bf = {\begin{xy}\xymatrix{ 
			 X \ar@{->}@(r,dr)^{Y}^<<<{id}^>>>{H} \ar@{->}@(dl,l)^{X}^<<<{id}^>>>{P} \ar@{->}@(ul,ur)^{X}^<<<{id}^>>>{E}
			 }\end{xy}}
	\]
	and let $\Gamma = \pi_1(\Af)$ and $\Lambda = \pi_1(\Bf)$. In other words, the groups $\Gamma$ and $\Lambda$ are defined by the following presentations:
	\begin{align*} \Gamma = \langle a,b,h,p &\mid ab=ba, a^h = a^2, (b^2)^h = b, a^p = a, b^p = ab \rangle \\
	\Lambda = \langle a,b,h,p,e &\mid ab=ba, a^h = a^2, (b^2)^h = b, a^p = a, b^p = ab, a^e=b^{-1}, b^e = a \rangle
	\end{align*}	
	(where $x^g$ stands for $gxg^{-1}$).The holonomy maps of $\Gamma$ and $\Lambda$ send $a,b$ to $1$, and $h,p,e$ to the matrices $H,P,E\in \SL_2(\RR)$ respectively. It follows from  $H^kPH^{-k} = \left( \begin{array}[pos]{cc} 1 & 4^k \\ 0 & 1 \end{array} \right)$ that
	\begin{equation} \overline{\hol(\Gamma)} = \overline{\langle H, P\rangle} =  \left.
	\left\{ \left( \begin{array}[pos]{cc} 2^k & x \\ 0 & 2^{-k} \end{array} \right) \right| \begin{array}[pos]{c} k \in \ZZ \\ x \in \RR \end{array} \right\} \label{equation:T}
	\end{equation}
	is solvable and in particular amenable. On the other hand $\langle P,E \rangle = \SL_2(\ZZ)$ contains a free discrete subgroup of $\SL_2(\RR)$ so that $\overline{\hol(\Lambda)} = \overline{\langle H, P, E\rangle} \subset \SL_2(\RR)$ is not amenable. In fact $\overline{\langle H, P, E\rangle} = \SL_2(\RR)$. 
	In view of Theorem~\ref{theorem:CVclassif} the group $\Gamma$ has the Haagerup property and is weakly amenable, while $\Lambda$ has neither of these properties.
	
	We now check using Theorem~\ref{theorem:QIclassif} that $\Gamma$ and $\Lambda$ are quasi-isometric. First observe that the Bass-Serre trees of the graphs of groups $\Af$ and $\Bf$ are the $6$ and $8$-regular trees respectively, so that they have infinitely many ends. Next observe that there is a compact subset $K \subset \SL_2(\RR)$ such that $\hol(\Gamma)K = \SL_2(\RR)$, so in particular $\Gamma$ and $\Lambda$ have Hausdorff equivalent holonomies. Finally, the subgroup $\langle h,p\rangle$ is a free subgroup of $\Gamma$ and $\Lambda$, so that neither $\Gamma$ nor $\Lambda$ is amenable; moreover the image of $\langle h,p \rangle$ by the respective holonomies is non-discrete in $\GL_2(\RR)$. It follows from Remark~\ref{remark:amenable_proper} that both $\Gamma$ and $\Lambda$ fall into the class \eqref{case:folded} of Theorem~\ref{theorem:QIclassif}, hence they are quasi-isometric to each other.
	\end{proof}
	
	\begin{remark} While we show that the Cowling-Haagerup constant is not a QI invariant, it follows from Theorem~\ref{theorem:CVclassif} that the class $\GBS_n$ does not contain examples of two quasi-isometric groups with different \emph{finite} Cowling-Haagerup constant.
	\end{remark}
	
	\subsection{The role of locally compact groups} \label{sec:LC}
	For the convenience of the reader, we briefly outline ideas behind Cornulier-Valette's equivalence \eqref{item:Haagerup} $\Leftrightarrow$ \eqref{item:hol_amenable} in Theorem~\ref{theorem:CVclassif}. 
	Doing so we emphasize the role played by non-discrete locally compact groups. 
	
	\begin{remark} The QI relation naturally extends to compactly generated locally compact (c.g.\ l.c.) groups, while ME makes sense for unimodular second countable locally compact (u.s.c.\ l.c.) groups. All invariants mentioned in the introduction extend naturally to these larger settings, with the caveat that amenability is a QI invariant only within \emph{unimodular} c.g.\ l.c.\ groups (see \cite{KyedPetersenVaes13} for the ME invariance of the ratio of $L^2$-Betti numbers).
	\end{remark}
	
	Recall the definitions of the groups $\Gamma, \Lambda$ and the matrices $H,P,E \in \SL_2(\RR)$ from the previous section. Let $\varphi : F_2 \to \SL_2(\RR)$ and $\psi : F_3 \to \SL_2(\RR)$ be homomorphisms mapping a basis of $F_2$ to $(H,P)$, respectively a basis of $F_3$ to $(H,P,E)$. Finally, let $G = \RR^2 \rtimes_\varphi F_2$ and $L = \RR^2 \rtimes_\psi F_3$.
	
	To show that $\Gamma$ has the Haagerup property Cornulier and Valette realize $\Gamma$ as a discrete subgroup in the locally compact group $\Aut(T) \times G$ where $T$ is the ($6$-regular) Bass-Serre tree of the graph of groups $\Af$; it is known that $\Aut(T)$ has the Haagerup property. They proceed to show that $G$ has the Haagerup property, so that $\Gamma$ also has the Haagerup property. On the other hand, the group $\Lambda$ contains a finite index subgroup of $\ZZ^2 \rtimes \SL_2(\ZZ)$ which has property (T) relative to the subgroup $\ZZ^2$, so that $\Lambda$ cannot have the Haagerup property.
	
	
	\begin{remark} \label{remark:LCexamples} The semidirect products $G$ and $L$ are arguably simpler (non-discrete) examples showing that the Haagerup property and weak amenability are not QI invariants among \emph{unimodular} s.c.\ c.g.\ l.c.\ groups. Such examples cannot be obtained by cocompact inclusions $H < H'$ as this would force $H$ and $H'$ to be ME. Let $T_2 = \overline{\langle H,P \rangle}$ be the group described in equation \eqref{equation:T}: although it provides an example of two quasi-isometric groups with only one of them being Haagerup, the related cocompact inclusion $\RR^2 \rtimes T_2 < \RR^2 \rtimes \SL_2(\RR)$ is uninteresting from the point of view of ME as $\RR^2 \rtimes T_2$ is not unimodular.
	\end{remark}
		
	\section*{Acknowledgements}
	This note grew out of a collaboration with Romain Tessera, whom I thank for helpful discussions. I heartily thank Kevin Whyte for agreeing to write up Appendix A. I am also grateful to Alain Valette for his comments on previous versions of this note. Finally, I thank the referee for his comments which improved the exposition.
	
\appendix
	\section*{Appendix A: QI-classification of graphs of $\ZZ^n$'s}
	\begin{center} \textsc{by K. Whyte} \end{center}

\subsection{Introduction}
The goal of this Appendix is to show that the groups constructed by Carette are quasi-isometric. This is essentially one direction of the results of section 3 of  \cite{Whyte10}, with the notation somewhat simplified as we are concerned only with graphs of $\ZZ^n$s rather than more general vertex groups. Unless otherwise specified, the groups discussed here are all {\bf graphs of $\ZZ^n$s}, meaning that, for some fixed $n$, they are fundamental groups of finite graphs of groups in which every edge and vertex group is $\ZZ^n$ and all the edge-to-vertex inclusions are finite index. The resulting Bass-Serre tree $T$ is finite valence and  for simplicity we will further only discuss the case where  $T$ has infinitely many ends (called {\bf bushy}), as the cases with $T$ finite or $2$-ended are not relevant here. The key result then becomes:
  
\begin{theorem} \label{FoldedQI} If two bushy graphs of $\ZZ^n$s, $\Gamma$ and $\Gamma'$,  both have folded holonomy maps, and after conjugating, have holonomy images at bounded Hausdorff distance then they are quasi-isometric.  
\end{theorem} 
      
   Definitions and precise statements are below.  As one can see from this statement, the holonomy map is the key ingredient.    The outline of the proof  is as follows: first we show that we can encode a quasi-isometry between the groups as quasi-isometries between the trees that respect the holonomy up to bounded distance (Lemma \ref{QIfromTreeMap}).   Second we show that the holonomy maps have three basic types, with folded holonomy the "generic" type (Theorem \ref{trichotomy}).   Third we show that folded holonomy means that one can lift maps to the holonomy image to the tree and can do so in many ways  (the somewhat technical Lemmas \ref{directedlift} and \ref{lift}).  Finally we use this lifting to show that one can recover the QI type of a folded holonomy graph of $\ZZ^n$s just from the Hausdorff class of the holonomy image (Theorem \ref{TreeMapsExist}). Together this will prove Theorem \ref{FoldedQI}.
      
 \subsection{Holonomy and Model Spaces}   
  Let $\Gamma$ be a graph of $\ZZ^n$s.  Observe that we can factor the holonomy map $\Gamma \to GL_n(\RR)$ through the Bass-Serre tree $T$. Fix a base vertex $v_0 \in T$ and definite the holonomy map as the action by conjugation on $stab(v_0) \otimes \QQ = \QQ^n$.   The holonomy $\hol(\gamma)$ is then determined by its action on any finite index subgroup of $stab(v_0)$.   If $\gamma$ fixes a vertex $v \in T$, $\hol(\gamma)$ is trivial as we know that $stab(v)$ is commensurable to $stab(v_0)$ and is Abelian (in fact $\ZZ^n$) and so $\gamma$ commutes with a finite index subgroup of $stab(v_0)$.   If we pick a fundamental domain $T_0$ in $T$ for the $\Gamma$ action  then we can factor $\hol$ as a map $h: T \to GL_n(\RR)$ by defining $h(v) = hol(\gamma)$ for $\gamma$ such that $v \in \gamma T_0$.   We then have $h(\sigma v) = hol(\sigma) h(v)$ and $h(u) = Id$ for $u\in T_0$ (in particular $h(v_0) = Id$).
  
We connect the holonomy to the geometry by a "tree of spaces" model space for  $\Gamma$.    This is a model space $X$ for $\Gamma$ built with a {\bf vertex space} $X_v = \RR^n$ for each vertex $v \in T$, metrized in the standard way and with the vertex stabilizer acting via the standard action of $\ZZ^n$ on $\RR^n$.   Likewise, an {\bf edge space} $X_e = \RR^n \times [0,1]$ for each edge $e$ of $T$ (with the edge stabilizer acting by the standard $\ZZ^n$ action), with the ends $\RR^n \times 0$ and $\RR^n \times 1$ identified with the vertex spaces for the endpoints of of $e$ via affine maps that are equivariant with respect to the inclusions of the edge stabilizer into the corresponding vertex stabilizers.   Topologically $X$ is $T\times \RR^n$ with the diagonal action ($\Gamma$ acting on $\RR^n$ via $hol$) but the metrics are different.   

We construct a $\Gamma$-invariant metric on $X$ by using the map $h$.  Metrize $T_0 \times \RR^n$  with the product metric.  Each vertex space $X_v$ is isometric to $\RR^n$ by moving it under the $\Gamma$ action to a vertex space in $T_0 \times \RR^n$.    Two elements that move $v$ to its orbit representative in $T_0$ differ by a vertex stabilizer, and these act isometrically on $\RR^n$, so the resulting vertex space metrics are well defined.  To extend the metric to all of $X$ we need to describe how the gluing maps for $[0,1] \times \RR^n$, for each edge, behave relative to the metrics we have put on the vertex spaces.   This is equivalent to giving the closest point projection maps from one vertex space to another.  For vertex spaces $X_u$ and $X_v$ we can define this projection,  $\pi_{uv}:X_u \to X_v$, as $h(v)^{-1}h(u)$.    There are other choices in describing such a model space that produce the same metric: changing the identification of each $X_v$ with $\RR^n$ by an isometry, or globally choosing a different metric on $\RR^n$.   This is the same as changing $h$ by conjugation and composition of each $h(v)$ with an isometry of $\RR^n$.     Lemma \ref{QIfromTreeMap}  gives similar criteria for two model spaces (for possibly different groups) to be quasi-isometric. 

\begin{definition}\label{QIoverZ} Let $X$, $Y$, and $Z$ be spaces and $\alpha : X \to Z$, $\beta: Y \to Z$ maps.  We say $(X,\alpha)$ and $(Y,\beta)$ are {\bf quasi-isometric over $Z$} if there is a quasi-isometry $f:X \to Y$ such that $\beta \circ f$ and $\alpha$ are at bounded distance. 
\end{definition}

\begin{definition} Two graphs of $\ZZ^n$s, $\Gamma$ and $\Gamma'$, have coarsely equivalent holonomy if there is $A \in GL_n(\RR)$ so that $(T,A h A^{-1})$ and $(T',h')$ are quasi-isometric over $GL_n(\RR)$.
\end{definition}

\begin{lemma} \label{QIfromTreeMap} Two graphs of $\ZZ^n$s, $\Gamma$ and $\Gamma'$,  with coarsely equivalent holonomy are quasi-isometric. 
\end{lemma}

\begin{proof} Let $f:T\to T'$ be a quasi-isometry as in Definition \ref{QIoverZ}.

First note that as changing basepoints in the trees changes the holonomy to a conjugate, we may assume we have chosen basepoints $v_0$ and $v'_0$ so that $f(v_0) = v'_0$ (with a possibly different $A$).    \\

We build the map $F : X \to X'$ as a map of spaces.   For each $v\in T$ we will send $X_v \to X'_{f(v)}$ by the linear map $F_v = h'(f(v))^{-1}  Ah(v)$.    By assumption these linear maps are all within $R$  distance of the identity in $GL_n(\RR)$ and hence are uniformly Lipschitz as maps $\RR^n \to \RR^n$.     

Similarly, if $e$ is an edge of $T$ with endpoints $u$ and $v$ then the closest point projection $\pi_{uv} : X_u \to X_v$ is $h(v)^{-1}h(u)$.   Likewise, the projection $\pi'$ from $X'_{f(u)}$ to $X'_{f(v)}$ is $h'(f(v))^{-1} h'(f(u))$.      From the definition of the maps on the vertex spaces, this is the same as $F_v (Ah(v))^{-1} (Ah(u)) {F_u}^{-1} = F_v \pi {F_u}^{-1}$.  Thus $F_v \pi = \pi' F_u$, meaning that these maps respect closest point projections so the combined map $F$ is Lipschitz.   

The same construction applied to the coarse inverse $g : T' \to T$ gives $G: X' \to X$.     The compositions are self-maps of $X$ and $X'$ that respect the trees of spaces decomposition and are at bounded distance from the identity on $T$ and $T'$.     For any $v \in T$ that gets maps to a nearby $v'$, the map along the $\RR^n$ fiber $X_v \to X_{v'}$ is $h(v')^{-1} h(v)$, and since this agrees with the closest point projection we have that the composition is at bounded distance from the identity.   Thus $F$ and $G$ are coarse inverses.   
\end{proof}

The converse is also true, so two bushy graphs of $\ZZ^n$s are quasi-isometric iff there is a quasi-isometry that respects the tree of spaces and is linear along vertex spaces (see \cite{Whyte10}).   This implies the conjugacy class of  image of the holonomy up to bounded Hausdorff distance is a quasi-isometry invariant.    Then our main result here actually says that the holonomy image is nearly a complete invariant for the most graphs of $\ZZ^n$s.   To make precise the exact conditions under which Hausdorff close images (up to conjugacy) is sufficient to build a map bewteen trees as above we need to examine the possible behavior of holonomy maps.   In particular, if $e \in T$ is an edge, dividing $T$ into halfspaces $T^+$ and $T^-$, we want to understand how $h(T^+)$ and $h(T^-)$ compare to $h(T)$.   If $\overline{h(T^+)} = \overline {h(T)}$ then we say that $T^+$ is {\bf full}.\\

We have the following trichotomy: 

\begin{theorem} \label{trichotomy} Every graph of $\ZZ^n$s satisfies exactly one of:
\begin{itemize}
\item (Proper holonomy) $h: T \to GL_n(\RR)$ is a proper map.
\item (Parabolic holonomy)   For every $e$ in $T$ exactly one side is full (and then $\Gamma$ is an ascending HNN extension).
\item (Folded holonomy)  For every $e$ in $T$ both sides are full.
\end{itemize}
\end{theorem}

\begin{proof}
This follows from a sequence of observations about the possible behavior of the holonomy map, largely using the fact that the $\Gamma$ action on $T$ is cocompact and $h$ is equivariant with respect to the $hol$ homomorphism. In what follows we will assume $T$ is bushy as it is needed later, but the trichotomy holds in general.

{\bf Claim}. If there is an edge $e_0$ such that neither $T^-$ nor $T^+$ is full then $h$ is proper.

\begin{proof} Assume by contradiction that $h$ is not proper. Then there is a sequence $\gamma_1, \gamma_2, \ldots$ in $\Gamma$ so that $hol(\gamma_i) \to {Id}$ in $GL_n(\RR)$ and $\gamma_i v\to \infty$ in $T$ for any $v \in T$.   By passing to a further subsequence we may assume $\gamma_i v \to p \in \partial T$.    Let $T^+$ be the half-space of $e_0$ that contains $p$.

For any $a \in hol(\Gamma)$ choose a $u$ in $T$ with $h(u) = a$.    By construction we have $h(\gamma_i u) = hol(\gamma_i)h(u) \to h(u)=a$, and $\gamma_i u \to p \in \partial T^+$, so $\gamma_i u\in T^+$ for $i\gg 0$.   Thus $a \in \overline{h(T^+)}$.   Thus $T^+$ is full, a contradiction, so $h$ must be proper. 

\end{proof}

\begin{lemma}. If there is an edge $e_0$ such both  $T^-$ nor $T^+$ are full then the same is true for every edge.  
\end{lemma}

\begin{proof}

 Let $e$ be an edge of $T$ and $S$ one of its sides.    By co-compactness there is a $\gamma \in \Gamma$ so that $\gamma e_0 \in S$.    Thus $S$ contains $\gamma T^-$  or $\gamma T^+$, and since each of these is full and $h(\gamma T^{\pm}) = hol(\gamma) h(T^{\pm})$, we have that $S$ is also full.

\end{proof}

Finally, if neither of the previous claims applies then we must have every edge having exactly one side full.  If we orient edges to point towards the full side then we get a $\Gamma$ invariant orientation on $T$, and each vertex must have exactly one incident edge oriented away from it.    This descends to the graph $T/{\Gamma}$, which is finite.     Thus each vertex in $T/ {\Gamma}$ must have exactly one edge pointed in and one pointed out.  This implies $T/{\Gamma}$ is a circle.    Further, since the edge pointed out of a vertex has only one lift at each vertex of $T$ above it, the inclusion of the edge group to the vertex group for this edge is an isomorphism.    Thus $\Gamma$ is an ascending HNN extension of $\ZZ^n$. 

\end{proof}

{\bf Remark}: It is not hard to see that proper holonomy means $\Gamma$ is virtually a semi-direct product $\ZZ^n \rtimes F$ for $F$ a discrete free subgroup of $GL_n(\ZZ)$, and we have seen that parabolic holonomy means $\Gamma$ is virtually an ascending HNN extension of $\ZZ^n$.    Thus the folded case is the generic situation, in particular it includes the groups constructed by Carette.    We show below that for those groups the Hausdorff class of the holonomy is actually a complete QI invariant.   The same is immediate for the proper case.   For the parabolic case the situation is significantly more subtle and rigid, see \cite{FarbMosher00}.  The full quasi-isometry classification of graphs of $\ZZ^n$s is given in \cite{Whyte10}.  The case of $n=1$ is older, there the holonomy image is, up to bounded distance, either trivial or everything (see \cite{FMbs}, \cite{WhyBS}).

\subsection{Folded holonomy and Lifting}

We now return to the proof of the main result, Theorem \ref{FoldedQI}.   Let $\Gamma$ and $\Gamma'$ be bushy graphs of $\ZZ^n$s with folded and coarsely equivalent holonomy.  By lemma  \ref{QIfromTreeMap}, we can show $\Gamma$ and $\Gamma'$ are quasi-isometric if we find a quasi-isometry $T \to T'$ that, after conjugating, commutes up to bounded distance with holonomy maps.      We can conjugate the holonomy map by changing  the model space for $\Gamma$, so we will assume we have done that first and have the images of the holonomy at finite Hausdorff distance and are looking to build a map $T \to T'$ commuting with the holonomy within bounded distance. 

We first need to refine what we know about holonomy maps in the folded case, making the fullness of the half spaces quantitative.   We call {\bf directed path lifting} this quantitative statement as in lemma \ref{directedlift}.

\begin{lemma} \label{directedlift}Let $\Gamma$ be a graph of $\ZZ^n$s with folded holonomy.    For any $R_0 > 0$ and $R>0$ there is $D>0$ so that for any $e = (v^-,v^+)$ in $T$  an edge and $T_+$ a side, and any $a \in h(T)$ with $d(a, h(v^-)) \leq R$ then there is $v \in T_+$ with $d(v,v^+) \leq D$ and $d(h(v),a)\leq R_0$.
\end{lemma}

\begin{proof}

Suppose not.  Then we have an $R_0$ and $R$ for which there is no such $D$.    Let $e_i \in ET$ and $a_i \in h(T)$ be a sequence with $d(h({v_i}^-), a_i) \leq R$ so that for all $u$ in ${T^+}_i$ within $i$ of ${v_i}^+$ we have $d(h(u),a_i) > R_0$.

Since the $\Gamma$ action is cocompact we can pass to a subsequence and translate to reduce to the case that $e_i = e $ is the same throughout the sequence.   Further, since $GL_n(\RR)$ is locally compact, we can pass to a further subsequence such that $a_i \to a \in \overline{h(T)}$.      Thus there is some $i_0$ so that $d(a_i,a) \leq \frac{R_0}{2}$ for $i > i_0$.   Thus we have that for every $i > i_0$ there is no $u \in T^+$ within $i$ of $v^+$ so that $d(h(u),a) \leq \frac{R_0}{2}$ since such a $u$ would have $d(h(u),a_i) \leq \frac{R_0}{2} + \frac{R_0}{2} = R_0$.    As $i \to \infty$  this says that $h(T^+)$ is disjoint from the ball of radius $\frac{R_0}{2}$ around $a$, so $a \notin \overline{h(T_+)}$, contradicting that $T^+$ is full.

\end{proof}
 
\subsection{Building the Quasi-isometry}

In view of lemma \ref{QIfromTreeMap}, the next result implies Theorem \ref{FoldedQI}.
 
\begin{theorem}\label{TreeMapsExist}  Fix $H \subset GL_n(\RR)$.   All two bushy graphs of $\ZZ^n$s with folded holonomy coarsely equivalent to $H$ have Bass-Serre trees that are quasi-isometric over $GL_n(\RR)$.   
\end{theorem}

\begin{proof}

 The first step is to show that  directed path lifting allows us to lift maps from $GL_n(\RR)$ to $T$:
 
 \begin{lemma}\label{lift} Let $\Gamma$ be a bushy graph of $\ZZ^n$s with directed path lifting, and let $T'$ be a tree of bounded valence with lipschitz map $f: T' \to GL_n(\RR)$ with image in some $D$-neighborhood of $hol(\Gamma)$.    There are $(K,C)$ so that for any halfspace $T^+$ of $T$ and vertices $u_0 \in T^+$ and $v_0 \in T'$ so that $d(h(u_0),f(v_0))\leq R_0$ there is a subtree $T''$ of $T^+$ containing $u_0$ and a quasi-isometry $\phi: (T',v_0)  \to (T'',u_0)$ so that $d(h(\phi(v)),f(v)) \leq C$ for all $v \in T'$.   Further, the vertices of $T''$ which are of lower valence in $T''$ than in $T$ are $K$-coarsely dense. 
 \end{lemma}
 
 \begin{proof}
 
First note that we can change $f$ a bounded distance so that $f(T') \subset h(T)$ and $f(v_0) = h(u_0)$.   Further, we can shrink $T^+$ to a smaller half-space, so wlog we can assume $u_0$ is an edge $e_0$ with $T^+$ one side of $e_0$.   Take $r_{0}$  large enough so that any hemisphere (sphere around an edge intersected with one halfspace of that edge) of radius $r_{0}$ in $T$ has strictly larger cardinality than the valence of $T'$.  This is possible since $T$ is bushy and cocompact and $T'$ has bounded valence.  

We will build the lift inductively on the ball of radius $n$ in $T'$  around $v_0$ with the further property at each stage  that the map is injective on vertices and takes the ball of radius $n$ into to a subtree $S_n$ of $T$ whose extreme points include the image of the points in the sphere of radius $n$.   For $n=0$ we simply map $v_0$ to $u_0$, so $S_0 = \{u_0\}$ and $\phi_0 (v_0) = u_0$. 

We next explain how to extend from the ball of radius $n$ to $n+1$.  Let $v$ be a vertex in the sphere of radius $n$ in $T'$.    Since $u=\phi_n(v)$ is an extreme point of $S_n$, if $n>0$ there is a unique edge $e$ of $S_n$ at $u$.   Let $H$ be the halfspace of $e$ in  $T$ that contains $u$ (note that $H$ is a subset of $T^+$).     If $n=0$ we take $H = T^+$.  Let $w_1,w_2, \ldots, w_k$ be the vertices in $T'$ adjacent to $v$ at distance $n+1$.    By the definition of $r_0$ we can find distinct vertices $u_1,\ldots, u_k$ of $H$ at distance $r_0$ from $u$.   For each $u_i$ choose an edge $e_i$ at $u_i$ connecting to a vertex at distance $r_0 + 1$ and let $H^+_i$ be the halfspace of $e_i$ disjoint from $S_n$.   By directed path lifting for $T$ there is some uniform $R$ so that we can find $u'_1, u'_2,\ldots, u'_k$ with $u'_i \in H^+_i$ within $R$ of $u_i$ and with $d(h(u'_i),f(w_i)) \leq C$.   We then extend $S_n$ by adding the geodesics from $u$ to $u'_i$ and $\phi_{n+1}$ mapping the edge $\overline{v w_i}$ to this geodesic.     Since the lengths of these geodesics are bounded by $K = r_0 + 1 + R$, the extension is $K$-Lipschitz,  the images of distinct vertices are at least $r_0$ apart and $K$-coarsely dense in $S_{n+1}$.   Thus the extension has the desired properties.      By the definition of $r_0$ the valence of $v$ in $S_{n+1}$ is strictly smaller than its valence in $T$, so the vertices of $T''$ which are of lower valence in $T''$ than in $T$ are $K$-coarsely dense as claimed.

\end{proof}

Using this lemma, we prove Theorem \ref{TreeMapsExist}.    Fix any one bushy Bass-Serre tree $T_0$ with $h_0 \to GL_n(\RR)$ having image Hausdorff equivalent to $H$.     We can then build other trees over $GL_n(\RR)$ as follows: 

Let $Q$ be any tree with the edges labelled by vertices of $T_0$.  Build a tree $T(Q)$ as a subgraph of  $T_0 \times Q$ whose edges are all of $ET_0 \times VQ$ and includes $v \times e$ in $VT_0 \times EQ$ if  $e$ has label $\ell(e)=v$.    Then $T_0 \times VQ$ is a sub-forest, and if we collapse each component to a vertex we recover the tree $Q$, so $T(Q)$ is a tree.     The tree $T(Q)$ has a canonical map to $GL_n(\RR)$ build by first projecting to $T_0$ then applying $h_0$.

We claim that any Bass-Serre tree, $T$,  with folded holonomy and image coarsely $H$ is quasi-isometric over $GL_n(\RR)$ with some $T(Q)$.   First we claim that there is a vertex covering of $T$ by disjoint subtrees, which are lifts of $T_0$ as in lemma \ref{lift} and such that the maps from each subtree to  $T_0$ assemble to a Lipschitz map $T \to T_0$.  

We build this inductively, covering larger and larger subtrees of $T$.  We can start the induction by finding a single subtree from lemma \ref{lift}.    Suppose then that we have covered some ${T'}$ in $T$.  If this is not all of $T$ then there is an edge $e$ with one endpoint in $T'$, and one endpoint $e_-$ not in ${T'}$.  Using lemma \ref{lift} we then build a lift of $T_{0}$ through the other endpoint $e_+$ of $e$ on the opposite side of $e$ from  ${T'}$, using  $e_+$ and the preimage of $e_-$ in $T_0$ as basepoints.    By induction we  can cover all of $T$ in this way.    Further,  the edges of $T$ not in any of these subtrees attach to each subtree in a coarsely dense subset.

This covers $T$ by a subforest with all components uniformly quasi-isometric to $T_0$ and such that the maps to $T_0$ agree on the endpoints of any edge outside the forest.    If we let $Q$ be the tree we obtain by collapsing each component and give the edges the labels coming from the maps from the subtrees to $T_0$ we see that $T$ is quasi-isometric over $GL_n(\RR)$ with the tree constructed above.   Moreover, the edges at each vertex of $Q$ correspond to the edges of $T$ adjacent to one subtree $T'$ and we know form a $C-$coarsely dense subset of $T'$ and that the labelling by vertices of $T_0$ are given by a quasi-isometry $T' \to T_0$. 

To complete the proof we show that two such trees, $T(Q)$ and $T(Q')$ (built from the same bushy Bass-Serre tree $T_0$), are quasi-isometric over $GL_n(\RR)$.   This holds if there is an isomorphism between $\phi: Q \to Q'$ and $R>0$ so that for all edges $e$ of $Q$ we have $d_{T_0} (l(e),l'(\phi(e))) \leq R$.  We can build such an isomorphism inductively simply by producing bijections between the edges at a vertex of $Q$ and one of $Q'$ one vertex at a time (note that if such a $\phi$ exists then by cocompactness of $T_0$ we can produce bijections which take on a given value at a fixed $e_0$ to within a fixed $R_0$).   As these labelings come from $C$-coarsely dense subsets of quasi-isometric trees the required bijections exist by \cite{thesis}.  Thus $T(Q)$ and $T(Q')$  are quasi-isometric over $GL_n(\RR)$, completing the proof of Theorem \ref{TreeMapsExist} .

\end{proof}
	
\appendix	
	\section*{Appendix B: Equivariant compression is not a QI-invariant}
	\begin{center} \textsc{by S. Arnt, T. Pillon and A. Valette} \end{center}

	Equivariant compression was introduced as a way to quantify the Haagerup property. For $G$ a finitely generated group, $p\in[1,+\infty[$, and $f:G\rightarrow L^p$ a $G$-equivariant map (with respect to some affine isometric action of $G$ on $L^p$), the $L^p$-compression of $f$ is: 
	$$\comp_p(f)=\sup \{\alpha\in[0,1]: \exists C>0: C|x^{-1}y|_S^\alpha - C \leq \|f(x)-f(y)\|_p \;\forall x,y\in G\},$$
	where $|.|_S$  denotes word length with respect to the finite generating set $S$ in $G$; clearly $\comp_p(f)$ does not depend on the choice of $S$. 
	Then the $L^p$-compression of $G$ is $\alpha^\sharp_p(G)=\sup \comp_p(f)$, where the supremum is taken on all $G$-equivariant maps $f:G\rightarrow L^p$. It is a result by Naor and Peres (see \cite[Thm. 9.1]{NP}) that $\alpha_p^\sharp$ is a QI-invariant among finitely generated amenable groups. The purpose of this Appendix is to use Carette's examples from Theorem 1 above, to show that, as it might be expected, $\alpha_p^\sharp$ is not QI-invariant among all finitely generated groups. 
	
	\begin{theorem} For $1\leq p\leq 2$, the vanishing of $\alpha_p^\sharp$ is not a QI-invariant.
	\end{theorem}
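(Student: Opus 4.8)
The plan is to use the quasi-isometric groups $\Gamma$ and $\Lambda$ furnished by Theorem~\ref{theorem:main} and to prove that $\alpha_p^\sharp(\Lambda) = 0$ while $\alpha_p^\sharp(\Gamma) > 0$ for every $p \in [1,2]$; as $\Gamma$ and $\Lambda$ are quasi-isometric, this settles the statement.

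First I would treat $\Lambda$. As recalled in Section~\ref{sec:LC}, $\Lambda$ contains a finite-index subgroup $M$ of $\ZZ^2 \rtimes \SL_2(\ZZ)$, and the pair $(M, M \cap \ZZ^2)$ has relative property (T) (inherited from the classical relative property (T) of $(\ZZ^2 \rtimes \SL_2(\ZZ), \ZZ^2)$, since $M$ has finite index); note that $M \cap \ZZ^2$ has finite index in $\ZZ^2$, hence is infinite. Let $f : \Lambda \to L^p$ be an arbitrary $\Lambda$-equivariant map for some affine isometric action on $L^p$; being an orbit map, $\|f(x) - f(y)\|_p$ depends only on $x^{-1}y$. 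Let $\Phi$ be the Bretagnolle--Dacunha-Castelle--Krivine embedding of the metric space $(L^p, \|\cdot\|_p^{p/2})$ into a Hilbert space, available because $1 \le p \le 2$. Applying $\Phi$ to the orbit map and extending the resulting isometries of $\Phi(L^p)$ to affine isometries of its closed affine hull, one obtains an affine isometric action of $\Lambda$ on a Hilbert space with orbit map $g \mapsto \Phi(f(g))$. Restricting this to $M$ and invoking relative property (T) in the form ``every cocycle is bounded on the relative subgroup'', the map $\Phi \circ f$, hence also $f$ itself, is bounded on $M \cap \ZZ^2$. Since $M \cap \ZZ^2$ is an infinite subgroup of $\Lambda$ it contains elements of arbitrarily large word length, so the inequality defining $\comp_p(f)$ cannot hold with any positive exponent; thus $\comp_p(f) = 0$ for every equivariant $f$ and $\alpha_p^\sharp(\Lambda) = 0$.

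Next I would establish $\alpha_p^\sharp(\Gamma) > 0$, and here it is enough to treat $p = 2$: a real Hilbert space embeds linearly and, up to a scalar, isometrically into $L^p$ for $1 \le p \le 2$ (via Gaussian random variables), an embedding that transports affine isometric actions to affine isometric actions, so $\alpha_2^\sharp(\Gamma) > 0$ yields $\alpha_p^\sharp(\Gamma) \ge \alpha_2^\sharp(\Gamma) > 0$. To build a $\Gamma$-equivariant map into a Hilbert space with positive compression I would superpose two cocycles. The first comes from the action of $\Gamma$ on its Bass--Serre tree $T$, viewed as a space with walls; it controls the tree direction with exponent $\tfrac12$ but is blind to the vertex stabilizers. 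The second has to control the $\ZZ^2$ vertex groups, which are exponentially distorted in $\Gamma$ because of the Baumslag--Solitar relations $a^h = a^2$ and $(b^2)^h = b$, so that $a^m b^n$ has word length of order $\log(|m| + |n|)$ in $\Gamma$. The relevant input is that $BS(1,2)$ has positive equivariant Hilbert compression, and the task is to assemble a $\Gamma$-equivariant family of measured wall (or labelled-partition) structures on the stabilizers, refining the tree walls and compatible with the holonomy, so that the associated cocycle, restricted to a $\ZZ^2$ stabilizer, separates $a^m b^n$ from the identity by a number of walls dominating a positive power of $\log(|m|+|n|)$. The $\ell^2$ direct sum of the two cocycles then provides a proper affine isometric action of $\Gamma$ whose orbit map has positive compression; one expects the value $\tfrac12$. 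Alternatively one might quote a lower bound for the equivariant compression of generalized Baumslag--Solitar groups with amenable holonomy closure, should one be available.

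I expect this second cocycle to be the main obstacle: one has to reconcile $\Gamma$-equivariance across the HNN moves with the exponential distortion of the $\ZZ^2$ stabilizers, arranging the wall structure to be simultaneously invariant under the holonomy and fine enough that the induced pseudometric on $\Gamma$ dominates a positive power of the word metric. Once $\alpha_2^\sharp(\Gamma) > 0$ is known, the theorem follows immediately: $\Gamma$ and $\Lambda$ are quasi-isometric by Theorem~\ref{theorem:main}, yet $\alpha_p^\sharp(\Gamma) > 0 = \alpha_p^\sharp(\Lambda)$ for every $p \in [1,2]$, so the vanishing of $\alpha_p^\sharp$ is not a quasi-isometry invariant.
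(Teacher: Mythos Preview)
Your overall plan coincides with the paper's: use the groups $\Gamma,\Lambda$ from Theorem~\ref{theorem:main} and show $\alpha_p^\sharp(\Lambda)=0$ while $\alpha_p^\sharp(\Gamma)>0$ for $1\le p\le 2$. For $\Lambda$, your argument via the Bretagnolle--Dacunha-Castelle--Krivine embedding and relative property~(T) is a correct, spelled-out version of what the paper obtains in one line by quoting Corollary~6.23 of \cite{CDH}: a proper affine isometric $L^p$-action for $p\le 2$ would force the Haagerup property, contradicting Theorem~\ref{theorem:main}.

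The $\Gamma$ half, however, is left incomplete. You outline a two-cocycle construction (tree walls plus a wall structure on the $\ZZ^2$ stabilisers compatible with the holonomy) but explicitly flag the second cocycle as ``the main obstacle'' and do not build it; your fallback is to quote a compression bound for $\GBS_n$ groups with amenable holonomy closure ``should one be available''. Such a result is precisely what the paper invokes: Theorem~6.3 of \cite{CornulierValette12} yields $\alpha_p^\sharp(\Gamma)=\max\{1/p,1/2\}$ once one checks three hypotheses, namely that $\overline{\hol(\Gamma)}$ is amenable, that it is cocompact in a closed connected subgroup of $\GL_2(\RR)$ (the upper triangular subgroup of $\SL_2(\RR)$), and that $\RR^2$ is exponentially distorted in $\RR^2\rtimes\hol(\Gamma)$ (immediate from the presence of $H$). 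Your reduction to $p=2$ via the Gaussian embedding is valid but becomes unnecessary with this citation. Without either completing the direct construction or locating this reference, the $\Gamma$ part of your argument remains a genuine gap.
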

	
	\begin{proof} We use the two groups $\Gamma$ and $\Lambda$ from the proof of Theorem \ref{theorem:main}. We will show that, for $1\leq p\leq 2$, we have $\alpha_p^\sharp(\Gamma)=\frac{1}{p}$ while 
	$\alpha^\sharp_p(\Lambda)=0$.
	\begin{enumerate}
	\item[1)] $\alpha_p^\sharp(\Lambda)=0$ for $p\in[1,2]$. Indeed, assume by contradiction $\alpha^\sharp_p(\Lambda)>0$ for some $p$. Then $\Lambda$ admits a proper affine isometric action on $L^p$. By Corollary 6.23 of \cite{CDH} (using $1\leq p\leq 2$), the group $\Lambda$ has the Haagerup property, which is a contradiction.
	\item[2)] For every $p\geq 1$, we have $\alpha_p^\sharp(\Gamma)=\max\{\frac{1}{p},\frac{1}{2}\}$. This will follow from Theorem 7.3 in \cite{CornulierValette12}, of which we recall the relevant part.
	\begin{theorem} For $G\in \GBS_n$, non-amenable, assume the following 3 conditions:
	\begin{itemize}
	\item $\overline{hol(G)}$ is amenable;
	\item there exists a closed, almost connected subgroup $L\subset \GL_n(\RR)$ in which $\overline{hol(G)}$ is co-compact;
	\item the inclusion $\RR^n\rtimes \hol(G)\rightarrow \RR^n\rtimes L$ induces a quasi-isometry in restriction to $\RR^n$, where $\hol(G)$ is endowed with the discrete topology in the first semi-direct product, and both semi-direct products are endowed with the word length associated to a compact generating subset. 
	\end{itemize}
	Then $\alpha^\sharp_p(G)=\max\{1/p,1/2\}$ for $p\geq 1$. $\square$
	\end{theorem}

	This result is proved by constructing a homomorphism $G\rightarrow Aut(T)\times F\times (\RR^n\rtimes L)$ which is a quasi-isometric embedding (where $T$ is the Bass-Serre tree of $G$ and $F$ is some free group) and estimating $\alpha^\sharp_p(Aut(T)\times F\times (\RR^n\rtimes L))$.
	
	We now check that the group $\Gamma$ of Theorem \ref{theorem:main} satisfies the three assumptions of the previous result. First, $\overline{\hol(\Gamma)}$ is clearly amenable. Second, $\overline{\hol(\Gamma)}$ is co-compact in the upper triangular subgroup $L$ of $\SL_2(\RR)$, which is almost connected. Finally a sufficient condition for the third condition to hold, appears in Proposition 2.7(b) of \cite{CornulierValette12}: it is enough that $\RR^2$ is exponentially distorted in $\RR^2\rtimes \hol(\Gamma)$, i.e. $\limsup_{m\rightarrow\infty}\frac{|mv|_S}{\log m}<\infty$ for every vector $v\in\RR^2$ (where $|.|_S$ is word length with respect to some compact generating set $S$). This holds here because the matrix $H=\left(\begin{array}{cc}2 & 0 \\0 & \frac{1}{2}\end{array}\right)$ lies in $\hol(\Gamma)$.\qedhere
	\end{enumerate}
	\end{proof}
	
	We thank Mathieu Carette for offering us to write up our result as an Appendix to his paper.


\end{document}